\theoremstyle{plain}
\newtheorem{thm}{Theorem}[section]
\newtheorem{lem}[thm]{Lemma}
\newtheorem{cor}[thm]{Corollary}
\newtheorem{prop}[thm]{Proposition}
\theoremstyle{definition}
\newtheorem{Def}[thm]{Definition}
\newtheorem{rem}[thm]{Remark}
\newtheorem{ex}[thm]{Example}
\def\C1{C_{1,\cdot}}
\def\scal#1#2{\langle #1,#2 \rangle}
\title [On the unitary part of isometries ...]{On the unitary part of isometries in commuting, completely non doubly commuting pairs.}
\author{Zbigniew Burdak}
\author{Marek Kosiek}
\author{Marek S\l{}oci\'nski}
\address{Department of Applied Mathematics, University of
Agriculture,\newline ul. Balicka 253c, 30-198 Krakow,
Poland.}
\email{rmburdak@cyf-kr.edu.pl}
\address{Wydzia\l{} Matematyki i Informatyki,
Uniwersytet Jagiello\'nski, ul. Prof. St. \L{}ojasiewicza 6, 30-348 Krak\'ow, Poland}
\email{Marek.Kosiek@im.uj.edu.pl}
\address{Wydzia\l{} Matematyki i Informatyki,
Uniwersytet Jagiello\'nski, ul. Prof. St. \L{}ojasiewicza 6, 30-348 Krak\'ow, Poland}
 \email{Marek.Slocinski@im.uj.edu.pl}
\begin{document}

\maketitle
\begin{abstract}In the paper we describe a unitary extension of any isometry in a commuting, completely non doubly commuting pair of isometries. Precisely we show that Hilbert space of such an extension is a linear span of subspaces reducing  the extension to bilateral shifts.
\end{abstract}
\section{Introduction and preliminaries}
Let $H$ be a complex Hilbert space and $L(H)$ denotes the algebra of all bounded linear operators acting on $H.$  For operator $T\in L(H)$ by negative power $T^n$ we understand $T^{*|n|}.$ For any subspace $L\subset H$ by $P_L$ we understand the orthogonal projection on $L.$ Recall that $L$ reduce $T\in L(H)$ if and only if $T$ commutes with $P_L.$ Recall the classical Wold's result \cite{W}:
\begin{thm}\label{W}
Let $V\in L(H)$ be an isometry. There is a unique decomposition of
$H$ into orthogonal, reducing for $V$ subspaces
\[H=H_u\oplus H_s, \] such that $V|_{H_u}$ is a unitary operator,
$V|_{H_s}$ is a unilateral shift.
 Moreover \[H_u=\bigcap_{n\geq
0}V^nH,\quad H_s=\bigoplus_{n\geq0} V^n\ker V^*. \] $\hfill\Box$
\end{thm}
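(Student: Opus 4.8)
The plan is to build the decomposition explicitly from the \emph{wandering subspace} $L := \ker V^* = H \ominus VH$; this equality holds because $V$ is an isometry, so $VH$ is closed and $H = VH \oplus \ker V^*$. First I would verify that the subspaces $V^nL$, $n\geq 0$, are mutually orthogonal: for $n>m$ and $x,y\in H$ one computes $\scal{V^nx}{V^my}=\scal{V^{n-m}x}{y}$ using $V^*V=I$, and since $y$ may be taken in $L=\ker V^*=(VH)^\perp$ while $V^{n-m}x\in VH$ (as $n-m\geq 1$), this inner product vanishes. Hence $H_s:=\bigoplus_{n\geq 0}V^nL$ is a well-defined orthogonal sum and is plainly $V$-invariant.

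The central computation is the finite iteration of the elementary splitting $H=VH\oplus L$. Applying $V^k$ and summing, I would obtain for each $n$
\[ H = V^nH \oplus \bigoplus_{k=0}^{n-1} V^kL. \]
Intersecting over $n$ gives $\bigcap_{n\geq 0}V^nH = H\ominus\bigoplus_{k\geq 0}V^kL = H\ominus H_s$. I therefore define $H_u := H\ominus H_s=\bigcap_{n\geq 0}V^nH$, which already establishes the stated formulas for both summands.

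It then remains to identify the restrictions. For $H_u$ I would show $VH_u=H_u$: the inclusion $VH_u\subseteq\bigcap_{n\geq 1}V^nH=\bigcap_{n\geq 0}V^nH=H_u$ is immediate, and conversely any $x\in\bigcap_{n}V^nH$ has, by injectivity of $V$, a unique preimage $z$ with $x=Vz$, where $z\in V^nH$ for every $n$, so $z\in H_u$. Thus $V|_{H_u}$ is a surjective isometry, i.e.\ unitary; writing $x=Vw$ with $w\in H_u$ gives $V^*x=w\in H_u$, so $H_u$ (and hence $H_s$) reduces $V$. On $H_s$ the orthogonality established above exhibits $L$ as a wandering subspace generating $H_s$, which is exactly a unilateral shift.

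For uniqueness, suppose $H=H_u'\oplus H_s'$ is any reducing decomposition with $V|_{H_u'}$ unitary and $V|_{H_s'}$ a unilateral shift. Since the decomposition reduces $V$, one has $V^nH=V^nH_u'\oplus V^nH_s'=H_u'\oplus V^nH_s'$; as $V|_{H_u'}$ is unitary while $\bigcap_n V^nH_s'=\{0\}$ for a shift, intersecting yields $\bigcap_n V^nH=H_u'$, forcing $H_u'=H_u$ and $H_s'=H_s$. I expect the only genuinely delicate point to be the surjectivity $H_u\subseteq VH_u$, where injectivity of the isometry and the independence of the preimage $z$ on $n$ must be used carefully; the remaining steps are routine orthogonality bookkeeping.
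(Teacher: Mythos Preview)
Your proof is correct and follows the standard construction of the Wold decomposition. Note, however, that the paper does not actually prove this theorem: it is stated as the classical result of Wold with a reference to \cite{W} and closed with a $\Box$, so there is no proof in the paper to compare your argument against. Your approach is the usual one found in the literature.
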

For a given isometry $V\in L(H)$ subspaces $H_u, H_s$ always mean suitable subspaces in the Wold decomposition given by Theorem \ref{W}. The restrictions $V|_{H_u}, V|_{H_s}$ we call simply a unitary part and a shift part of an isometry.
It arises a natural question about a generalization of Wold result to a pair or a family of operators.  The most natural generalization, which following \cite{CPS} is supposed to be called a multiple canonical Wold decomposition, is obtained in some special cases (\cite{BKS}, \cite{Sl}). In general case of a pair or a family of commuting isometries there are investigated a multiple Wold type decomposition or models (\cite{BDF}, \cite{BDF1}, \cite{BCL}, \cite{B0}, \cite{GG},  \cite{GS}, \cite{Pop}, \cite{Su}.) Recall that operators $T_1, T_2\in L(H)$ doubly commute if they commute and $T_1^*T_2=T_2T_1^*.$ Consider $V_1, V_2\in L(H)$ a pair of isometries. It can be found a maximal subspace reducing it to a doubly commuting pair. In \cite{Sl} is described a multiple Wold decomposition in case of doubly commuting pair. Moreover there is given a model of a pair of doubly commuting unilateral shifts.  Therefore we consider a completely non doubly commuting pairs (i.e. the only subspace of $H$ reducing $V_1, V_2$ to a doubly commuting pair is $\{0\}.$) Examples of such pairs are non doubly commuting unilateral shifts or so called modified bi-shift (see \cite{Pop}).
 Note that if operators commute and one of them is unitary, then they doubly commute. Thus, in considered completely non doubly
 commuting pairs each of the isometry has a nontrivial unilateral shift part and a restriction to any nontrivial subspace reducing
 both operators also have a nontrivial unilateral shift part. However the unitary part may be, but not need to be trivial. There are known examples of undecomposable pairs where unitary part of any isometry is a bilateral shift (mentioned modified bi-shift). In the paper we try to answer the question what kind of unitary operator can be a unitary part of an isometry in a completely non doubly commuting pair.

\section{Multiple Wold decomposition for pairs of isometries}
In the chapter we are going to recall more precisely obtained multiple canonical Wold decomposition and a multiple Wold type decompositions. For a definition of a multiple canonical decomposition in general case refer to \cite{CPS}. In a case of a pair of commuting isometries the definition is as follows:
\begin{Def}
Suppose $V_1,\;V_2$ is a pair of isometries on
$H$. The multiple canonical Wold decomposition is given by a decomposition of a Hilbert space
\[H=H_{uu}\oplus H_{us}\oplus H_{su} \oplus H_{ss},\] where $H_{uu},\; H_{us},\;
H_{su},\; H_{ss}$ are reducing subspaces for $V_1$ and $V_2$ such
that \begin{enumerate}\item[] $V_1|_{H_{uu}},\; V_2|_{H_{uu}}$ are
unitary operators, \item[] $V_1|_{H_{us}}$ is a unitary operator,
$V_2|_{H_{su}}$ is a unilateral shift, \item[] $V_1|_{H_{su}}$ is
a unilateral shift, $V_2|_{H_{su}}$ is a unitary operator,\item[]
$V_1|_{H_{ss}},\; V_2|_{H_{ss}}$ are unilateral
shifts.\end{enumerate}
\end{Def}
 By \cite{Sl} and \cite{BKS}  there are multiple canonical Wold decompositions for doubly commuting pairs of isometries or
 if $\dim \ker V_1^*<\infty$ and $\dim \ker V_2^*<\infty.$  However in the general case we have only a weaker result. Recall a definition from \cite{Pop}.
\begin{Def}
A pair of isometries $V_1,V_2$ is called a weak bi-shift if and
only if $V_1|_{\bigcap_{i\geq 0}\ker V_2^*V_1^i},$
$V_2|_{\bigcap_{i\geq 0}\ker V_1^*V_2^i}$, and the product isometry
$V_1V_2$ are shifts.
\end{Def}
The following general decomposition of a pair of commuting isometries obtained in \cite{Pop} is not a canonical one.
\begin{thm}\label{wbs}  For any pair of commuting
isometries $V_1,V_2$ on $H$ there is a unique decomposition
\[H=H_{uu}\oplus H_{us}\oplus H_{su}\oplus H_{ws},\] such that
$H_{uu},\; H_{us},\; H_{su},\; H_{ws}$ reduce $V_1$ and $V_2$ and
\begin{enumerate}
\item[] $V_1|_{H_{uu}}, V_2|_{H_{uu}}$ are unitary operators,
\item[] $V_1|_{H_{us}}$ is a unitary operator, $V_2|_{H_{us}}$ is
a unilateral shift,\item[] $V_1|_{H_{su}}$ is a unilateral shift,
$V_2|_{H_{su}}$ is a unitary operator, \item[]
$V_1|_{H_{ws}},V_2|_{H_{ws}}$ is a weak bi-shift.\end{enumerate}
\end{thm}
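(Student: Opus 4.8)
The plan is to peel off, one at a time, the maximal reducing subspaces on which a single isometry is unitary, and then to identify the remainder as a weak bi-shift; uniqueness will follow because each piece admits an intrinsic description. The starting observation is the elementary fact that a unitary commuting with an isometry automatically doubly commutes with it: if $U$ is unitary, $V$ an isometry and $UV=VU$, then $V=U^*VU$ and hence $VU^*=U^*V$. This is what will repeatedly convert ``one operator is unitary'' into the doubly commuting situation of \cite{Sl}, where a Wold decomposition of the second isometry is available with \emph{reducing} components.

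First I would treat $H_{uu}$ through the product isometry $W=V_1V_2$. Writing $\mathcal U_W=\bigcap_{n\ge0}W^nH$ for its Wold unitary part (Theorem~\ref{W}), one checks that $\mathcal U_W$ is invariant under $V_1$ and $V_2$ and that, since $W$ is onto on $\mathcal U_W$, each of $V_1,V_2$ is onto $\mathcal U_W$; an isometry mapping a subspace onto itself is unitary there, so both $V_1|_{\mathcal U_W}$ and $V_2|_{\mathcal U_W}$ are unitary and $\mathcal U_W$ reduces both. Conversely, any reducing subspace on which both are unitary carries $W$ unitary, hence sits inside $\mathcal U_W$; thus $H_{uu}:=\mathcal U_W=\bigcap_{n\ge0}(V_1V_2)^nH$ is forced. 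Next I would construct the mixed parts. Let $\mathcal U_1=\bigcap_{n\ge0}V_1^nH$ be the unitary part of $V_1$; it reduces $V_1$ and is $V_2$-invariant, but in general not $V_2$-reducing (indeed for the modified bi-shift it is a nonzero bilateral shift while the pair is completely non doubly commuting). Let $R_1$ be the largest subspace of $\mathcal U_1$ reducing both $V_1$ and $V_2$, i.e. the largest subspace contained in $\mathcal U_1$ and invariant under the $*$-algebra generated by $V_1,V_2$. Then $V_1|_{R_1}$ is a reducing restriction of the unitary $V_1|_{\mathcal U_1}$, hence unitary, so by the lemma $V_1|_{R_1}$ and $V_2|_{R_1}$ doubly commute. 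Applying Theorem~\ref{W} to $V_2|_{R_1}$ and using that anything commuting with $V_2|_{R_1}$ and its adjoint leaves both Wold subspaces invariant, those components reduce $V_1$ and $V_2$; I set $H_{us}$ to be the shift component and verify that the unitary component $\bigcap_{m\ge0}V_2^mR_1$ is exactly $H_{uu}$. Symmetrically $R_2\subseteq\mathcal U_2$ produces $H_{su}$ together with the same $H_{uu}$. Since $H_{us}\subseteq\mathcal U_1$ while $H_{su}$ lies in the shift part of $V_1$, and each pair sits in a single Wold decomposition, the three spaces $H_{uu},H_{us},H_{su}$ are mutually orthogonal and reduce both isometries.

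Finally I would set $H_{ws}=H\ominus(H_{uu}\oplus H_{us}\oplus H_{su})$, which automatically reduces $V_1,V_2$, and check the three defining properties of a weak bi-shift. The product condition is immediate: $H_{ws}$ is contained in the shift part $H\ominus H_{uu}$ of $W=V_1V_2$ and reduces $W$, so $V_1V_2|_{H_{ws}}$ is a shift. The essential and hardest point is the remaining pair of conditions, namely that $V_1$ restricted to $\bigcap_{i\ge0}\ker V_2^*V_1^i$ and $V_2$ restricted to $\bigcap_{i\ge0}\ker V_1^*V_2^i$ (computed inside $H_{ws}$) are shifts. Here the maximality built into $R_1,R_2$ must be exploited: one has to show that the Wold unitary part of $V_1$ acting on the $V_1$-invariant subspace $\bigcap_{i\ge0}\ker V_2^*V_1^i$ already reduces \emph{both} isometries, and was therefore absorbed into $R_1$, so that nothing unitary survives in $H_{ws}$. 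I expect this comparison between an a priori only $V_1$-reducing unitary part and the largest jointly reducing subspace $R_1$ to be the main obstacle. Uniqueness should then come for free, since $H_{uu}$ is the unitary part of $V_1V_2$, the sums $H_{uu}\oplus H_{us}=R_1$ and $H_{uu}\oplus H_{su}=R_2$ are intrinsically characterized as the largest jointly reducing subspaces on which $V_1$, respectively $V_2$, is unitary, and $H_{ws}$ is the orthogonal complement of the three.
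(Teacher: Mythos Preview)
The paper does not prove this theorem at all: it is quoted from Popovici \cite{Pop}, stated with the attribution ``obtained in \cite{Pop}'' and closed with a bare $\Box$. So there is no in-paper argument to compare your proposal against; any comparison has to be with Popovici's original proof, not with anything in the present manuscript.

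That said, your outline follows essentially the same architecture as Popovici's: identify $H_{uu}$ as the unitary part of the product $V_1V_2$, peel off the largest jointly reducing subspaces on which one factor is unitary to get $H_{us}$ and $H_{su}$, and define $H_{ws}$ as the remainder. Your treatment of $H_{uu}$ and of the mixed pieces via the doubly-commuting observation is correct and clean. The place where you explicitly flag an obstacle is exactly the substantive step: showing that, inside $H_{ws}$, the Wold unitary part of $V_1$ on $K=\bigcap_{i\ge0}\ker V_2^*V_1^i$ is trivial. You phrase the strategy correctly---one must show that $\bigcap_{n\ge0}V_1^nK$ reduces \emph{both} isometries, so that maximality of $R_1$ forces it to be $\{0\}$---but you do not carry it out, and it is not automatic: $K$ is only $V_1$-invariant and lies in $\ker V_2^*$, so neither $K$ nor its $V_1$-unitary part is a priori $V_2$-reducing. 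The missing ingredient is a direct computation showing that $L:=\bigcap_{n\ge0}V_1^nK$ is $V_2$-invariant (use $V_2K\subset V_1K$, which follows from $V_2\ker V_2^*\subset V_2H=V_1V_2H\oplus V_1\ker V_2^*$ restricted appropriately) and that $V_2^*L\subset L$ (here $L\subset K\subset\ker V_2^*$ gives $V_2^*L=\{0\}$ once one checks $L\subset\ker V_2^*$, so this direction is trivial). Working this out is precisely what Popovici does; once you fill it in, your uniqueness argument is fine, since each summand is characterised intrinsically exactly as you describe.
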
$\hfill\Box$

 We are going to focus on a weak bi-shift part. Precisely, we consider a pair of isometries which decomposition given by Theorem
 \ref{wbs} trivializes to a weak bi-shift subspace. In such a case the subspace reducing isometries to a doubly commuting pair is trivial
 or reduces the isometries to a pair of unilateral shifts. Indeed in other case the decomposition of a restriction to a doubly commuting pair of isometries would give a non trivial subspace orthogonal to $H_{ws}.$ By \cite{Pop}  there can be found a maximal subspace of $H_{ws}$ which reduces isometries to a doubly commuting pair of unilateral shifts which model can be found in \cite{Sl}. Therefore we reduce our interest to a completely non doubly commuting pair of isometries. Such pairs are a case of a weak bi-shift which finer but not fully satisfying decomposition has been described in \cite{B0}.
 \section{Decomposition for single isometries}
A unitary part of an isometry in a modified bi-shift is a bilateral shift. Our aim is to describe a unitary part of an isometry in
a pair of commuting, completely non doubly commuting isometries.   In order to answer whether there can be some other operator,
note that being a bilateral shift is not a hereditary property. In other words a restrictions of a bilateral shift to some reducing subspace can be not a bilateral shift.
\begin{ex}\label{e1}
Let $H=L^2(m)$ where $m$ denotes the normalized Lebesgue measure on the unit circle $\mathbb{T}$.
Let $T$ be the operator of multiplication by the variable $''z''$, i.e. $(Tf)(z)=zf(z)$ for $f\in L^2(m)$.
Let $F$ denotes the spectral measure of $T$. Then $F(\alpha)f=\chi_\alpha f$ for $f\in L^2(m)$.
Let $\alpha$ be an entire subarc of $\Gamma$ and $m_\alpha$ be the restriction of $m$ to $\alpha.$
Let $T_\alpha$ be the restriction of $T$ to $F(\alpha)H.$ Since the spectrum $\sigma(T_\alpha)$ do not contain the whole unit circle the restriction $T_\alpha$ is not a bilateral shift.
\end{ex}
Note that a canonical decomposition of an operator $T\in L(H)$ with respect to some property is in fact a construction of a unique
maximal reducing subspace $H_p\subset H$ such that the restriction $T|_{H_p}$ has the property. Since being a bilateral shift is not a hereditary property, there is a problem with construction of a maximal subspace reducing operator to a bilateral shift. Example \ref{e2} in the last section shows that a maximal such subspace is not unique. The construction of a maximal bilateral shift subspace can be done by a construction of a maximal wandering subspace. We follow the idea of wandering vectors from \cite{B0}. Let $G$ be a semigroup and $\{T_g\}_{g\in G}$ be a a semigroup of
isometries on $H$. The vector $x\in H$ is called a \textit{wandering vector} ( for a given semigroup of isometries )
 if for any $g_1,\, g_2\in G$ and $g_1\neq g_2$ holds $(T_{g_1}
x,T_{g_2}x)=0$. For a semigroup generated by by two commuting isometries we obtain the following definition of a wandering vector.

\begin{Def}
A vector $x\in H$ is a wandering vector of isometry $V\in L(H)$ if $V^nx\perp x$ for every positive
$n.$
\end{Def}
Note that for a wandering vector $x$ a vector $x+Vx$ is not wandering. Indeed since $x$ is wandering, then also $Vx$ is wandering. Therefore $\scal{x+Vx}{V(x+Vx)}=\scal{Vx}{Vx}.$ Thus the only linear $V$ invariant subspace of wandering vectors is the trivial one. Since we are interested in reducing subspaces, there is no point in considering subspaces of wandering vectors. Instead we consider
 subspaces generated by wandering vectors.
 Note that for a wandering vector $x$ we have $V^nx\perp V^mx$ for $n\neq m$ but only positive. Let $H=H_u\oplus H_s$ denotes the Wold decomposition  of a given isometry $V\in L(H).$  If a wandering vector $x\in H_u$
then $V^nx\perp V^mx$ for $n,m\in\mathbb{Z}$.  However for $x\in H_s$ is not so clear.   On the other hand every vector in the set
$\bigcup_{n\ge 0}V^n(\ker T^*)$ is wandering, fulfill the orthogonality condition also for negative powers and generates the whole $H_s$. Therefore a ''weaker'' definition of a wandering vector seems to be sufficient.

 \begin{thm}\label{decthm}
For any isometry  $V\in L(H)$ there is a unique decomposition:
$$H=H_0\oplus H_w,$$
reducing operator $V$ such, that
\begin{itemize}
\item $H_w$ is linear span of wandering vectors,
\item $H_0\subset H_u.$
\end{itemize}
\end{thm}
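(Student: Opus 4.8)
The plan is to take $H_w$ to be the closed linear span of all wandering vectors and to set $H_0:=H\ominus H_w$; since this description is canonical and the decomposition is orthogonal, uniqueness will be immediate, and the whole task is to verify the two listed properties together with the fact that $H_0$ (equivalently $H_w$) reduces $V$. I would first record two elementary facts. Because $\scal{V^n(Vx)}{Vx}=\scal{V^nx}{x}$ for every $n$, the operator $V$ carries wandering vectors to wandering vectors, whence $VH_w\subseteq H_w$ and dually $V^*H_0\subseteq H_0$; so $H_w$ will reduce $V$ precisely when $VH_0\subseteq H_0$. Secondly, each vector $V^m e$ with $e\in\ker V^*$ is wandering, since $\scal{V^n(V^me)}{V^me}=\scal{V^ne}{e}=\scal{V^{n-1}e}{V^*e}=0$ for $n\ge1$, and these span $H_s$ by Theorem \ref{W}; hence $H_s\subseteq H_w$, and taking orthogonal complements gives $H_0\subseteq H_u$ \emph{for free}. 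So the property $H_0\subseteq H_u$ needs no further work, and everything comes down to showing $H_w$ reduces $V$.

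This is where the real difficulty lies. One cannot meet it through the naive hope that $V^*$ also preserves wandering vectors: that is false in general (already a single analytic inner factor inside a shift gives a wandering vector whose image under $V^*$ is not wandering). Moreover $H_w$ contains all of $H_s$, on which $V$ is a genuine unilateral shift, so a Wold decomposition applied directly to the isometry $V|_{H_w}$ merely reproduces the wandering subspace $\ker V^*$ and tells us nothing about reducibility. The way around this is to peel off the shift part. Writing each wandering $w=P_{H_u}w+P_{H_s}w$ and using $H_s\subseteq H_w$, one gets $P_{H_u}w\in H_w$, so $H_w=W_u\oplus H_s$, where $W_u:=H_w\cap H_u=\overline{\operatorname{span}}\{P_{H_u}w:\ w\ \text{wandering}\}\subseteq H_u$. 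As $H_s$ reduces $V$, the claim reduces to showing that $W_u$ reduces the \emph{unitary} operator $U:=V|_{H_u}$.

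The inclusion $UW_u\subseteq W_u$ is easy, since $UP_{H_u}w=P_{H_u}Vw=P_{H_u}(Vw)$ and $Vw$ is wandering. For the reverse I would run Wold's theorem on the isometry $U|_{W_u}$ and prove its wandering subspace $W_u\ominus UW_u$ is trivial. Indeed, if $y$ lies in that subspace then $U^ny\in UW_u$ for $n\ge1$ while $y\perp UW_u$, so $\scal{U^ny}{y}=0$ for $n\ge1$, i.e. $y$ is itself a wandering vector lying in $H_u$. But now, $U$ being unitary on $H_u$, the vector $U^{-1}y=V^*y$ is again wandering, because $\scal{U^n(U^{-1}y)}{U^{-1}y}=\scal{U^ny}{y}=0$; hence $U^{-1}y\in W_u$, and therefore $y=U(U^{-1}y)\in UW_u$, forcing $y=0$. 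Thus $UW_u=W_u$, so $W_u$ reduces $U$.

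Finally I would assemble the pieces: since $W_u$ reduces $U$ and $H_s$ reduces $V$, the orthogonal sum $H_w=W_u\oplus H_s$ reduces $V$, so its complement $H_0=H_w^{\perp}\subseteq H_u$ reduces $V$ as well, giving the desired decomposition; uniqueness follows because $H_w$ is intrinsically the span of all wandering vectors and $H_0$ is its orthogonal complement. The \emph{main obstacle} is exactly the reducibility step of the previous paragraph, and its resolution is to separate the unitary part $H_u$ from $H_w$ first and then exploit the unitarity of $V$ there, which is what makes the wandering subspace of $U|_{W_u}$ collapse — something that fails on $H_w$ itself because of the shift part.
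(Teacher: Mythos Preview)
Your argument is correct, and it reaches the same conclusion by a somewhat different route than the paper.

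Both proofs first observe $H_s\subseteq H_w$ and reduce the question to what happens on $H_u$. From there the paper proceeds \emph{constructively}: for an arbitrary wandering $w=w_u\oplus w_s$ it uses the identity $\scal{V^nw_u}{w_u}=-\scal{V^nw_s}{w_s}$ to check that $\tilde w:=(V^*w_u)\oplus w_s$ is again wandering, so that $P_{H_u}V^*w=P_{H_u}\tilde w\in H_w$, and hence $V^*H_w\subseteq H_w$ directly. You instead isolate $W_u=H_w\cap H_u$ and argue \emph{structurally}: any $y\in W_u\ominus UW_u$ is automatically a wandering vector in $H_u$, and unitarity of $U$ then makes $U^{-1}y$ wandering as well, forcing $y\in UW_u$ and collapsing the defect space. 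The underlying fact is the same in both proofs---on $H_u$ the map $V^*=U^{-1}$ preserves the wandering property---but the paper packages it as an explicit wandering vector $\tilde w$ built from a general $w$, while you package it as the vanishing of $\ker(U|_{W_u})^*$. Your version avoids mixing the $H_u$ and $H_s$ components in a single vector and is arguably cleaner; the paper's version is a one-line computation once the right $\tilde w$ is written down. Note also that invoking Wold for $U|_{W_u}$ is not really needed: since $UW_u$ is closed, $W_u\ominus UW_u=\{0\}$ already gives $UW_u=W_u$ and hence $U^*W_u=U^{-1}UW_u=W_u$.
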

\begin{proof}
Since $H_s\subset H_w$ we need to show only that $H_w$ is $V$ reducing. Obviously $H_w$ is $V$ invariant. Note that for $w$
wandering holds true $w, P_{H_s}w\in H_w$ and consequently also $P_{H_u}w=w-P_{H_s}w\in H_w.$ Note also that
$P_{H_u}H_w=P_{H_u}(H_w\ominus H_s)=H_w\ominus H_s.$  Thus vector $x\in H_w$ if and only if $P_{H_u}x\in H_w.$ Let us
show that $P_{H_u}V^*w\in H_w$ for arbitrary wandering vector $w.$ Denote $w_u=P_{H_u}w, w_s=P_{H_s}w$ for any vector
$w\in H.$ By $\scal{V^nw}{w}=\scal{V^nw_u}{w_u}+\scal{V^nw_s}{w_s}$ vector $w$ is wandering if and only if
$\scal{V^nw_u}{w_u}=-\scal{V^nw_s}{w_s}$ for every positive $n.$ On the other hand
$\scal{V^nV^*w_u}{V^*w_u}=\scal{V^nw_u}{w_u}=-\scal{V^nw_s}{w_s}.$ Thus if $w=w_u\oplus w_s$ is wandering then
$\tilde{w}=(V^*w_u)\oplus w_s$ is wandering as well.  Moreover $P_{H_u}V^*w=V^*P_{H_u}w=P_{H_u}\tilde{w}.$ On the other hand
since $\tilde{w}$ is wandering then by previous argumentation $P_{H_u}\tilde{w}\in H_w.$ Consequently $P_{H_u}V^*w\in H_w.$
Since $w$ was taken arbitrary wandering and $H_w$ is linearly spanned by wandering vectors, we get $V^*P_{H_u}H_w\subset H_w.$
By the showed inclusion and inclusion $H_s\subset H_w$ we get $V^*H_w\subset H_w.$  Consequently $H_w$ reduce $V.$
\end{proof}
Note some property of wandering vectors.
\begin{rem}\label{H0*inv}
Let $V, W\in L(H)$ is a pair of commuting isometries. Let $w$ be a wandering vector for isometry $V$. We have
$$\scal{V^nWx}{V^mWx}=\scal{WV^nx}{WV^mx}=\scal{V^nx}{V^mx}$$ for $n,m\in\mathbb Z_+$ and hence
$Wx$ is also a wandering vector for the isometry $V$.

\end{rem}
As an easy corollary we obtain the following.
\begin{cor}
The subspace $H_w$ in Theorem \ref{decthm} is invariant for every isometry  commuting with $V$ and $H_0$ is invariant for adjoint of isometry commuting with $V.$
\end{cor}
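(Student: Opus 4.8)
The plan is to deduce both assertions almost immediately from Remark \ref{H0*inv} together with the orthogonal decomposition $H=H_0\oplus H_w$ supplied by Theorem \ref{decthm}. The whole content is already packaged in the remark; what remains is to promote a statement about individual wandering vectors to a statement about the subspace they span, and then to invoke the standard duality between invariant subspaces and their orthogonal complements.

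First I would fix an isometry $W$ commuting with $V$ and prove that $WH_w\subseteq H_w$. By Remark \ref{H0*inv}, whenever $x$ is a wandering vector for $V$, the vector $Wx$ is again wandering for $V$, and hence $Wx\in H_w$. Since $H_w$ is the (closed) linear span of wandering vectors, $W$ therefore maps every finite linear combination of wandering vectors into $H_w$; by boundedness (continuity) of $W$ this extends to the closure, so $WH_w\subseteq H_w$. This is the invariance of $H_w$ under $W$.

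For the second assertion I would use that, because the decomposition in Theorem \ref{decthm} is orthogonal, one has $H_0=H\ominus H_w$, and then apply the elementary fact that invariance of a subspace under an operator forces invariance of its orthogonal complement under the adjoint. Concretely, for any $y\in H_0$ and any $x\in H_w$ we get $\scal{W^*y}{x}=\scal{y}{Wx}=0$, where $Wx\in H_w$ by the previous step and $y\perp H_w$. Thus $W^*y\perp H_w$, i.e. $W^*y\in H_0$, which is exactly the invariance of $H_0$ under $W^*$.

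I do not expect any genuine obstacle here. The only point deserving a word of care is the passage from the individual wandering vectors to their closed linear span $H_w$, which is handled purely by the continuity of the bounded operator $W$; the rest is the routine observation that $M$ invariant under $W$ yields $M^{\perp}$ invariant under $W^{*}$, applied to $M=H_w$.
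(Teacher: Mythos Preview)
Your proposal is correct and matches the paper's own proof essentially line for line: use Remark~\ref{H0*inv} to see that $W$ sends wandering vectors to wandering vectors, hence preserves their closed span $H_w$, and then pass to the orthogonal complement to get $W^*H_0\subset H_0$. You have simply written out explicitly the continuity step and the duality $M$ invariant under $W \Rightarrow M^\perp$ invariant under $W^*$ that the paper leaves implicit.
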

\begin{proof}
Let $W$ be any isometry commuting with $V.$ Since $H_w$ is linearly spanned by a set of all $V$ wandering vectors then by Remark \ref{H0*inv} it is $W$ invariant. Consequently $H_0$ is $W^*$ invariant.
\end{proof}

\section{Decomposition for pairs of isometries}
In this section we take advantage of the showed decomposition and construct a decomposition for pairs of isometries.
\begin{thm}\label{H+}
Let $H_w$ be a linear span of vectors wandering for $V_1\in L(H)$. Consider the decomposition $H=H_0\oplus H_w.$  For any isometry  $V_2\in L(H)$ commuting
with $V_1$ the subspace
 $H_0^+:=\bigvee_{n\ge 0}V_2^n H_0$ reduces $V_1, V_2$ and $V_1$ is unitary on it.
 \end{thm}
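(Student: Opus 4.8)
The plan is to verify all three assertions---that $H_0^+$ reduces $V_1$ and $V_2$ and that $V_1|_{H_0^+}$ is unitary---by working on the generating subspaces $V_2^n H_0$, $n\ge 0$, and then passing to their closed span. Throughout I would lean on two facts already available. First, by Theorem \ref{decthm} the subspace $H_0$ reduces $V_1$ and $H_0\subset H_u$, so the restriction $V_1|_{H_0}$ is a \emph{unitary}; in particular $V_1H_0=V_1^*H_0=H_0$. Second, since $V_2$ is an isometry commuting with $V_1$, the Corollary following Theorem \ref{decthm} gives that $H_0$ is invariant for the adjoint, i.e. $V_2^*H_0\subset H_0$.

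I would treat $V_2$ first. Invariance $V_2H_0^+\subset H_0^+$ is immediate from the definition, so it remains to prove $V_2^*H_0^+\subset H_0^+$. For this I compute $V_2^*$ on each generator: for $n\ge 1$ the relation $V_2^*V_2=I$ yields $V_2^*(V_2^n H_0)=V_2^{n-1}H_0\subset H_0^+$, while for $n=0$ the Corollary gives $V_2^*H_0\subset H_0\subset H_0^+$. As $V_2^*$ is bounded and $H_0^+$ is closed, it follows that $V_2^*H_0^+\subset H_0^+$, so $H_0^+$ reduces $V_2$.

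Next I would treat $V_1$. Using commutativity and $V_1H_0=H_0$ I get, for every $n$, the set equality $V_1(V_2^n H_0)=V_2^n(V_1 H_0)=V_2^n H_0$, so $V_1$ carries each generator \emph{onto} itself. Hence $V_1 H_0^+$, being the image of the closed subspace $H_0^+$ under the isometry $V_1$, is a closed subspace containing every $V_2^n H_0$, and therefore contains their span $H_0^+$; the reverse inclusion $V_1H_0^+\subset H_0^+$ follows from $V_1(V_2^n H_0)=V_2^n H_0\subset H_0^+$ together with continuity. Thus $V_1H_0^+=H_0^+$, so $V_1$ restricts to a surjective isometry, i.e. a unitary, on $H_0^+$. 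Finally, invariance plus surjectivity forces the adjoint inclusion: if $x=V_1y$ with $y\in H_0^+$, then $V_1^*x=V_1^*V_1y=y\in H_0^+$, so $V_1^*H_0^+\subset H_0^+$ and $H_0^+$ reduces $V_1$ as well.

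The one place where the completely non doubly commuting nature of the pair could bite is the reducing property for $V_2$: because $V_2^*$ need not commute with $V_1$, one cannot simply push $V_2^*$ through the defining span of $H_0^+$, and the argument genuinely rests on the Corollary ($V_2^*H_0\subset H_0$) to dispose of the $n=0$ term. Everything else is routine bookkeeping with $V_2^*V_2=I$, the commutation relation, and the unitarity of $V_1|_{H_0}$.
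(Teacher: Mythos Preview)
Your argument is correct and rests on the same two ingredients the paper uses: Theorem \ref{decthm} (giving that $H_0$ reduces $V_1$ with $H_0\subset H_u$, so $V_1H_0=V_1^*H_0=H_0$) and the Corollary/Remark \ref{H0*inv} (giving $V_2^*H_0\subset H_0$, which handles the $n=0$ term). The treatment of $V_2$ is identical to the paper's, only spelled out in more detail.

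The one genuine point of divergence is how unitarity of $V_1|_{H_0^+}$ is obtained. You argue \emph{directly}: from $V_1(V_2^nH_0)=V_2^n(V_1H_0)=V_2^nH_0$ you get $V_1H_0^+=H_0^+$, so the restriction is a surjective isometry, and $V_1^*$-invariance follows automatically. The paper instead first shows $V_1^*$-invariance via the computation $V_1^*V_2^nH_0=V_1^*V_2^nV_1V_1^*H_0=V_1^*V_1V_2^nV_1^*H_0=V_2^nH_0$, and then appeals to a separate fact---that $H_u=\bigcap_{n\ge 0}V_1^nH$ is hyperinvariant for $V_1$---to conclude $H_0^+\subset H_u$ and hence unitarity. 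Your route is slightly more self-contained (it avoids the hyperinvariance observation), while the paper's route makes the inclusion $H_0^+\subset H_u$ explicit, which is conceptually pleasant. Both are short and equivalent in strength.
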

 \begin{proof}
 It follows directly from the definition of $H_0^+$ that it is $V_1,V_2$ invariant.
 Since $H_0$ reduce $V_1$ to a unitary operator then
 $$V_1^*V_2^nH_0=V_1^*V_2^nV_1V_1^*H_0=V_1^*V_1V_2^nV_1^*H_0=V_2^nH_0.$$
 Hence $H_0^+$ reduces $V_1$. Note that by formula in Wold decomposition (Theorem \ref{W}) subspace $H_u$ is hyperinvariant. Thus, by $H_0\subset H_u$ follows $H_0^+\subset H_u.$ Consequently $H_0^+$ reduces $V_1$ to a unitary operator.

By Remark \ref{H0*inv}, the subspace $H_0^+$ reduces also $V_2$.
\end{proof}
Note that the subspace $H_0^+$ can be bigger that $H_0.$ Moreover, every wandering vector of a unitary operator generates a subspace reducing it to a bilateral shift. Recall Example \ref{e1}. The whole space $H$ is generated by wandering vectors. On the other hand, in a subspace $F(\alpha)H$ there are no vectors wandering for $T_\alpha.$ Therefore, despite $H\ominus H_0^+\subset H_w$, it is possible that operator $V|_{H\ominus H_0^+}$ can be nontrivially decomposed by Theorem \ref{decthm}. However, in such case we can repeat construction of subspace $H_{01}^+$ which is a subspace constructed by Theorem \ref{H+} for a pair $V_1|_{H\ominus H_0^+}, V_2|_{H\ominus H_0^+}.$ Define a sequence $\mathcal{H}_0=\{0\}$ and $\mathcal{H}_n=\mathcal{H}_{n-1}\ominus H_{0n}^+$ for positive $n$ where $H_{0n}^+$ is a subspace constructed by Theorem \ref{H+} for a pair $V_1|_{H\ominus \mathcal{H}_{n-1}}, V_2|_{H\ominus \mathcal{H}_{n-1}}.$ Note that $H_1:=\bigcap_{n=0}^\infty(H\ominus\mathcal{H}_n)$ is a subspace reducing for $V_1, V_2$ and, if $H$ was separable, subspace $H_0=\{0\}$ in a decomposition of $V_1|_{H_1}, V_2|_{H_1}$ by Theorem \ref{H+}. This way we obtain that whole $H_1$ is linearly spanned by vectors wandering for $V_1.$ We can now repeat a construction on $V_1|_{H_1}, V_2|_{H_1}$ to obtain subspace linearly spanned  by vectors wandering for $V_2.$ However the property of being linearly spanned by wandering vectors is not a hereditary property. Thus we obtain a subspace linearly spanned by vectors wandering for $V_2$ but but it might bo no longer linearly spanned by vectors wandering for $V_1.$ We solve the problem by other way.
Recall form \cite{B1}, that a pair of commuting contractions $V_1, V_2$ is called \textit{strongly completely
non unitary} if there is no proper subspace reducing $V_1,
V_2$ and at least one of them to a unitary operator. Moreover, there is a decomposition theorem (\cite{B1}, Thm. 2.1):
\begin{thm}\label{B1}
Let $T_1, T_2\in L(H)$ be a pair of commuting contractions. There
is a unique decomposition
$$H=H_{uu}\oplus H_{u\neg u}\oplus H_{\neg u u}\oplus
H_{\neg(uu)},$$ where subspaces $H_{uu}, H_{u\neg u}, H_{\neg u
u}, H_{\neg(uu)}$ are maximal such, that:
\begin{enumerate}
\item[] $T_1|_{H_{uu}}, T_2|_{H_{uu}}$ are unitary operators,
\item[]  $T_1|_{H_{u\sim u}}$ is a unitary operator,
$T_2|_{H_{u\neg u}}$ is a completely non unitary operator, \item[]
$T_1|_{H_{\neg u u}}$ is a completely non unitary operator,
$T_2|_{H_{\neg u u}}$ is a unitary operator, \item[]
$T_1|_{H_{\neg(uu)}}, T_2|_{H_{\neg(uu)}}$ is a strongly
completely non unitary pair of contractions.
\end{enumerate}
\end{thm}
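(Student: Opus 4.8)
The plan is to build the four subspaces from the classical canonical decomposition $H=H_u(T)\oplus H_{cnu}(T)$ of a single contraction into its maximal unitary reducing part $H_u(T)$ and its completely non unitary part. The structural fact I would isolate first is a \emph{unitarity upgrade}: if $U$ is unitary and commutes with a contraction $B$, then from $UB=BU$ one gets $U^*B^*=B^*U^*$, and since $U^*=U^{-1}$ also $B^*U=UB^*$; thus $U$ commutes with $B$ and with $B^*$, hence with every $I-B^{*n}B^n$ and $I-B^nB^{*n}$, and therefore reduces $H_u(B)$. In short, on any subspace reducing the pair on which one operator is unitary, that operator in fact \emph{doubly} commutes with the other, and the other's canonical decomposition is respected.

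With this in hand I would first produce $M_1$, the maximal subspace reducing both $T_1,T_2$ on which $T_1$ is unitary, and symmetrically $M_2$ for $T_2$. Existence is routine: the conditions $\norm{T_1^nx}=\norm{x}=\norm{T_1^{*n}x}$ cut out the kernels of the positive operators $I-T_1^{*n}T_1^n$ and $I-T_1^nT_1^{*n}$, which are closed subspaces, so the family of reducing-for-both subspaces on which $T_1$ is unitary is closed under closed linear span and has a largest element $M_1$. I then set $H_{uu}=M_1\cap M_2$, $H_{u\neg u}=M_1\ominus H_{uu}$, $H_{\neg u u}=M_2\ominus H_{uu}$ and $H_{\neg(uu)}=(M_1\vee M_2)^\perp$. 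That these reduce both operators is immediate; that $T_2$ is completely non unitary on $H_{u\neg u}$ follows from the upgrade fact, since the unitary part of $T_2|_{H_{u\neg u}}$ would then reduce $T_1$ as well and hence lie in $H_{uu}$, forcing it to be $0$ (symmetrically for $H_{\neg u u}$). Orthogonality of $H_{u\neg u}$ and $H_{\neg u u}$ is the first delicate point: here I would observe that $P_{H_u(T_2)}$ lies in the von Neumann algebra $W^*(T_2)$ (it is an infimum of spectral projections of the $I-T_2^{*n}T_2^n$ and $I-T_2^nT_2^{*n}$), whereas the projection onto any reducing-for-both subspace lies in the commutant $W^*(T_1,T_2)'$; these commute, so $H_{u\neg u}\subseteq H_{cnu}(T_2)$ while $H_{\neg u u}\subseteq H_u(T_2)$, and the two are orthogonal.

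That $H_{\neg(uu)}$ is a strongly completely non unitary pair is forced by the maximality of $M_1,M_2$: a reducing-for-both subspace of $H_{\neg(uu)}$ on which, say, $T_1$ is unitary would lie in $M_1$ yet be orthogonal to it. The remaining work is uniqueness together with maximality of $H_{\neg(uu)}$, and this is the step I expect to be the main obstacle, precisely because the two canonical decompositions $H_u(T_1)$ and $H_u(T_2)$ are in general incompatible (their projections do not commute), which is exactly why a genuine fourth ``entangled'' piece is unavoidable. I would handle it in two moves. First a lemma: an orthogonal sum of reducing-for-both strongly-cnu pairs is again strongly cnu; the trick is that if $T_1$ is unitary on a reducing subspace $N$ of $H_a\oplus H_b$, then for $x\in N$ the identity $\norm{P_{H_a}x}^2+\norm{P_{H_b}x}^2=\norm{x}^2=\norm{T_1x}^2$, combined with $\norm{T_1P_{H_a}x}\le\norm{P_{H_a}x}$ and $\norm{T_1P_{H_b}x}\le\norm{P_{H_b}x}$, forces equality in both, so $T_1$ is isometric (and, running $T_1^*$, coisometric) on each $\overline{P_{H_a}N}$, which therefore lands in the trivial unitary parts of $H_a,H_b$.

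Second, for uniqueness I would compare an arbitrary type-decomposition $H=G_{uu}\oplus G_{u\neg u}\oplus G_{\neg u u}\oplus G_{\neg(uu)}$ to the constructed one: the type conditions alone give $G_{uu}\subseteq H_{uu}$, $G_{u\neg u}\subseteq M_1\cap H_{cnu}(T_2)=H_{u\neg u}$ and $G_{\neg u u}\subseteq H_{\neg u u}$ (the identity $M_1\cap H_{cnu}(T_2)=H_{u\neg u}$ coming from the same commuting-projection splitting of $M_1$ as above). This reduces everything to showing that a reducing-for-both strongly-cnu pair $W$ contained in $Y:=M_1\vee M_2$ is $0$; but inside $Y$ one verifies $M_1=H_u(T_1|_Y)$, so now $P_{M_1}\in W^*(T_1|_Y,T_2|_Y)$ does commute with $P_W$, splitting $W$ into a part where $T_1$ is unitary and a part where $T_2$ is unitary, each necessarily $0$. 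Finally, maximality of $H_{\neg(uu)}$ follows by applying the existence result to $K^\perp$ for an arbitrary reducing-for-both strongly-cnu pair $K$, writing its strongly-cnu part as $A_{\neg(uu)}$, using the lemma to see that $K\oplus A_{\neg(uu)}$ is strongly cnu, and invoking uniqueness to conclude $K\subseteq H_{\neg(uu)}$.
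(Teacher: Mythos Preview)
The paper does not prove this theorem; it is quoted from \cite{B1} (Theorem~2.1) and used as a black box for the subsequent results, so there is no in-paper argument to compare your proposal against.

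On its own merits your argument is correct. The ``unitarity upgrade'' (a unitary commuting with a contraction automatically commutes with its adjoint) is the right structural lever, and your identification $M_1\cap H_{cnu}(T_2)=H_{u\neg u}$ via the commutation of $P_{M_1}\in W^*(T_2)'$ with $P_{H_u(T_2)}\in W^*(T_2)$ cleanly delivers both the orthogonality $H_{u\neg u}\perp H_{\neg u u}$ and the containments $G_{u\neg u}\subseteq H_{u\neg u}$ needed for uniqueness. The one step that deserves a second look is the claim $M_1=H_u(T_1|_Y)$ for $Y=M_1\vee M_2$: it does hold, because the orthogonality just established makes $Y=M_1\oplus H_{\neg u u}$ an \emph{orthogonal} sum of $T_1$-reducing subspaces with $T_1|_{H_{\neg u u}}$ completely non unitary, so the unitary part of $T_1|_Y$ is exactly $M_1$; once this is granted, your splitting of a strongly completely non unitary $W\subseteq Y$ into $W\cap M_1$ and $W\cap H_{\neg u u}$ (each reducing both operators, with one of them unitary on it) forces $W=0$. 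Your lemma that an orthogonal sum of strongly completely non unitary pairs is again strongly completely non unitary is also sound: the norm-splitting shows that both $T_1$ and $T_1^*$ are isometric on $\overline{P_{H_a}N}$, which reduces both operators, so $T_1$ is unitary there and the hypothesis on $H_a$ kills it.
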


The theorem in case of a pair of commuting isometries leads us to the following decomposition.
\begin{thm}
Let $V_1, V_2\in L(H)$ be a pair of commuting isometries. There is a decomposition $$H_{uu}\oplus H_{us}\oplus H_{su}\oplus H_S,$$
where \begin{enumerate}
\item $H_{uu}$ is a maximal subspace reducing $V_1, V_2$ to a pair of unitary operators,
\item $H_{us}$ is a maximal subspace reducing $V_1$ to a unitary operator and $V_2$ to a unilateral shift,
\item $H_{su}$ is a maximal subspace reducing $V_1$ to a unilateral shift and $V_2$ to a unitary operator,
\item $H_S$ reduce $V_1, V_2$ and $H_S$ is linearly spanned by vectors wandering for $V_1$ and is linearly spanned by vectors wandering for $V_2.$
\end{enumerate}
\end{thm}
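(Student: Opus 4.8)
The plan is to deduce the statement from the contraction decomposition of Theorem \ref{B1}, combined with the single-isometry decomposition of Theorem \ref{decthm} and the reduction result of Theorem \ref{H+}. First I would apply Theorem \ref{B1} to the pair $V_1, V_2$, which are contractions since they are isometries. This yields reducing subspaces $H_{uu}, H_{u\neg u}, H_{\neg u u}, H_{\neg(uu)}$, the first three being maximal with respect to the listed unitarity conditions. The key observation is that a completely non unitary isometry is, by Wold's theorem (Theorem \ref{W}), exactly a unilateral shift: an isometry whose unitary part $H_u=\bigcap_{n\ge 0}V^nH$ is trivial coincides with its shift part. Hence on $H_{u\neg u}$ the operator $V_2$ is a unilateral shift and on $H_{\neg u u}$ the operator $V_1$ is a unilateral shift, so that $H_{uu}$, $H_{us}:=H_{u\neg u}$ and $H_{su}:=H_{\neg u u}$ are precisely the three maximal subspaces required in items (1)--(3). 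It then remains to analyse $H_S:=H_{\neg(uu)}$, on which, by Theorem \ref{B1}, the pair $V_1,V_2$ is strongly completely non unitary.

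Next I would show that $H_S$ is linearly spanned by vectors wandering for $V_1$. Applying Theorem \ref{decthm} to $V_1$ restricted to $H_S$ gives a reducing decomposition $H_S=H_0\oplus H_w$, with $H_w$ the linear span of the $V_1$-wandering vectors and $H_0\subset H_u$. Feeding this into Theorem \ref{H+} produces the reducing subspace $H_0^+=\bigvee_{n\ge 0}V_2^nH_0$ on which $V_1$ is unitary. But strong complete non unitarity of the pair on $H_S$ forbids any nontrivial reducing subspace on which $V_1$ (or $V_2$) is unitary; therefore $H_0^+=\{0\}$, and since $H_0\subset H_0^+$ we obtain $H_0=\{0\}$, i.e. $H_S=H_w$ is spanned by $V_1$-wandering vectors. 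Interchanging the roles of $V_1$ and $V_2$ and invoking the symmetric form of Theorem \ref{H+} (with $\bigvee_{n\ge 0}V_1^nH_0'$ in place of $H_0^+$, where now $V_2$ would be unitary on it), the same strong complete non unitarity forces the corresponding $H_0'$ to vanish, so that $H_S$ is simultaneously spanned by $V_2$-wandering vectors. This establishes item (4).

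The step I expect to be the crux is precisely the use of strong complete non unitarity to force $H_0=\{0\}$: this is what resolves the obstruction described just before the theorem, where a direct iterative construction of a subspace simultaneously spanned by $V_1$- and $V_2$-wandering vectors fails because that spanning property is not hereditary. Here, by contrast, the strongly completely non unitary summand supplied by Theorem \ref{B1} already has the required property built in for both isometries at once, so no iteration over a sequence $\mathcal{H}_n$ is needed. The remaining points to check are routine: that the three unitarity conditions for isometries coincide with the shift conditions (via Wold's theorem), and that Theorem \ref{H+} applies verbatim after interchanging $V_1$ and $V_2$, the latter being justified by the symmetry already recorded in Remark \ref{H0*inv}.
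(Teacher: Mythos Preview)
Your proposal is correct and follows essentially the same approach as the paper: apply Theorem~\ref{B1}, identify $H_{us}=H_{u\neg u}$, $H_{su}=H_{\neg u u}$, $H_S=H_{\neg(uu)}$ via the observation that a completely non unitary isometry is a unilateral shift, and then use Theorem~\ref{decthm} together with Theorem~\ref{H+} and strong complete non unitarity to force $H_0^+=\{0\}$, hence $H_0=\{0\}$, for each of $V_1$ and $V_2$. The only detail you leave implicit is that $H_0^+\subset H_S$ (needed so that strong complete non unitarity on $H_S$ applies), but this follows immediately since $H_S$ reduces $V_2$.
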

\begin{proof}
Since a completely non unitary isometry is just a unilateral shift, Theorem \ref{B1} applied for isometries gives a decomposition
$H_{uu}\oplus H_{us}\oplus H_{su}\oplus H_{\neg(uu)}$. We need to show that $H_S=H_{\neg(uu)}$ has suitable properties.
We prove it for the operator $V_1$. Let $H_{\neg(uu)}=H_0\oplus H_1$ be a decomposition of $V_1|_{H_{\neg(uu)}},V_2|_{H_{\neg(uu)}}$ obtained by Theorem \ref{decthm} .
By Theorem \ref{H+} the subspace $H_0$ generates a subspace $H_0^+$ which reduces $V_1$ to a unitary operator and reduces $V_2$.
Since $H_{\neg(uu)}$ reduces $V_1, V_2$ it holds true $H_0^+\subset H_{\neg(uu)}$. On the other hand,
$V_1|_{H_{\neg(uu)}}, V_2|_{H_{\neg(uu)}}$ is a strongly completely non unitary pair.
Thus $H_0^+=\{0\}$ and consequently $H_0=\{0\}$. Hence $H_{\neg(uu)}=H_w$ and is linearly spanned by vectors wandering for $V_1.$
\end{proof}

The immediate consequence of the theorem is the following:
\begin{prop}
Let $V_1, V_2\in L(H)$ be a pair of commuting, completely non doubly commuting isometries. There are sets $W_1, W_2$ of vectors wandering for $V_1, V_2$ suitably such that:
$$H=\bigvee\left\{w\in W_i\right\}\quad\text{ for } i=1,2.$$
\end{prop}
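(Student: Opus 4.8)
The plan is to read this off directly from the decomposition theorem just established, the only extra input being the elementary remark recorded in the introduction that a commuting pair of isometries one of which is unitary must doubly commute.

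First I would apply the decomposition $H=H_{uu}\oplus H_{us}\oplus H_{su}\oplus H_S$ to the given pair. On each of the first three summands at least one isometry is unitary: $V_1$ is unitary on both $H_{uu}$ and $H_{us}$, while $V_2$ is unitary on both $H_{uu}$ and $H_{su}$. I would then record the standard computation that commuting with a unitary forces double commuting: if $V_1$ is unitary and $V_1V_2=V_2V_1$, then multiplying on the left by $V_1^*$ gives $V_2=V_1^*V_2V_1$, and multiplying on the right by $V_1^*$ together with $V_1V_1^*=V_1^*V_1=I$ yields $V_1^*V_2=V_2V_1^*$. Hence each of $H_{uu}, H_{us}, H_{su}$ reduces $V_1, V_2$ to a doubly commuting pair.

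Since the pair $V_1, V_2$ is completely non doubly commuting, the only reducing subspace on which they doubly commute is $\{0\}$; therefore $H_{uu}=H_{us}=H_{su}=\{0\}$ and the decomposition collapses to $H=H_S$. By the characterising property of $H_S$ in the theorem, $H$ is then simultaneously the closed linear span of vectors wandering for $V_1$ and the closed linear span of vectors wandering for $V_2$. I would finish by letting $W_1$ be the set of all $V_1$-wandering vectors and $W_2$ the set of all $V_2$-wandering vectors, so that $H=\bigvee\{w\in W_i\}$ for $i=1,2$.

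I do not expect a genuine obstacle here: the whole weight of the argument sits in the preceding theorem, and the only delicate point is verifying that the three summands carrying a unitary component must be trivial, which is exactly what the unitary-implies-doubly-commuting identity delivers.
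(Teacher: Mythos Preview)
Your argument is correct and is exactly the route the paper takes: the paper simply states that the proposition is an immediate consequence of the preceding decomposition theorem, and your proof spells out precisely why --- the ``unitary implies doubly commuting'' observation from the introduction forces $H_{uu}=H_{us}=H_{su}=\{0\}$, whence $H=H_S$.
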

 By the proof of Theorem \ref{decthm} the projection of a wandering vector on unitary subspace may not be wandering.
\begin{rem}\label{r2}
Let $V\in L(H)$ be an isometry and $w\in H$ be a wandering vector. Note that for every wandering vector in $H_u$ equality $\scal{V^nP_{H_u}w}{V^mP_{H_u}w}=0$ holds true for every $n,m\in\mathbb{Z}, n\ne m.$ Thus the minimal $V$ reducing subspace generated by $w$ is $\bigoplus_{n\in\mathbb{Z}}V^n\left(\mathbb{C}w\right).$
In other words the minimal $V$ reducing subspace generated by a wandering vector in $H_u$ reduce $V$ to a bilateral shift.
\end{rem}
Let us introduce a definition of some class of operators.
\begin{Def}
We call  an operator $V\in L(H)$ a span of bilateral shifts if there are subspaces $\{H_\iota\}_{\iota\in I}$ such that $H=\bigvee_{\iota\in I}H_\iota$ and $V|_{H_\iota}$ is a bilateral shift.
 \end{Def}
 Note that a span of bilateral shifts is a unitary operator.
\begin{cor}\label{bispan}
Let $V\in L(H)$ be an isometry such that $H$ is a linear span of $V$ wandering vectors. The minimal unitary extension $U\in L(K)$ of $V$ is a span of bilateral shifts.
\end{cor}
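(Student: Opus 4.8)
The plan is to produce, for each wandering vector, a reducing subspace of $K$ on which $U$ acts as a multiplicity-one bilateral shift, and then to check that the closed span of all these subspaces is the whole of $K$. So I fix a nonzero vector $w\in H$ that is wandering for $V$ and set $K_w:=\bigvee_{n\in\mathbb Z}U^n w$. First I would verify that $K_w$ reduces $U$: it is $U$-invariant by construction, and since $U^{*}(U^n w)=U^{n-1}w$ it is also $U^{*}$-invariant, hence reducing. It then suffices to show that the family $\{U^n w\}_{n\in\mathbb Z}$ is orthogonal and of constant norm, for then $U$ shifts the orthonormal basis $\{U^n w/\norm{w}\}_{n\in\mathbb Z}$ of $K_w$, and $U|_{K_w}$ is by definition a bilateral shift.

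The key computation is the orthogonality. Because $U$ is unitary, $\scal{U^n w}{U^m w}=\scal{U^{n-m}w}{w}$, so it is enough to prove $\scal{U^k w}{w}=0$ for every $k\ne 0$; and since the value at $-k$ is the conjugate of the value at $k$, I may assume $k\ge 1$. Here the crucial point is that $H$ is $U$-invariant and $U|_H=V$, so that $U^k w=V^k w$ for every $k\ge 0$. Consequently $\scal{U^k w}{w}=\scal{V^k w}{w}=0$ by the very definition of a vector wandering for $V$. Thus $\{U^n w\}_{n\in\mathbb Z}$ is orthogonal with $\norm{U^n w}=\norm{w}$ for all $n$, which is exactly what was needed; this is the analogue, carried out inside the unitary extension, of the observation in Remark \ref{r2} that a wandering vector in a unitary space generates a bilateral shift.

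Finally I would assemble the pieces. Each wandering vector $w$ lies in $K_w$, so the closed linear span $M:=\bigvee_{w}K_w$, taken over all vectors wandering for $V$, contains every such $w$; since by hypothesis $H$ is the closed linear span of these vectors, $H\subset M$. As a closed span of reducing subspaces, $M$ is itself reducing for $U$, whence $U^n M=M$ for all $n\in\mathbb Z$ and therefore $U^n H\subset M$ for all $n$. By minimality of the unitary extension $K=\bigvee_{n\in\mathbb Z}U^n H$, so $K\subset M\subset K$, i.e. $K=M$. This exhibits $K$ as the closed span of the subspaces $K_w$, each carrying a bilateral shift, so $U$ is a span of bilateral shifts.

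The computations are short, and the only genuine subtlety is the one I would flag as the main point: a vector is assumed wandering for the isometry $V$ only, that is, the orthogonality $\scal{V^k w}{w}=0$ is given for $k\ge1$, while the negative powers $U^k w$ with $k<0$ live outside $H$, where we have no direct control. The resolution is that one never evaluates these negative powers directly: unitarity reduces every inner product $\scal{U^n w}{U^m w}$ to one with a nonnegative exponent difference, which lands back in $H$ and is governed by the one-sided wandering condition. Everything else, namely that $K_w$ and $M$ reduce $U$ and that $K=\bigvee_{n}U^n H$, is routine.
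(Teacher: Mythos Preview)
Your argument is correct and follows essentially the same route as the paper: the paper observes that each $V$-wandering vector is $U$-wandering, invokes Remark~\ref{r2} (which, since $U$ is unitary, gives that the minimal $U$-reducing subspace $L_w=\bigoplus_{n\in\mathbb Z}U^n(\mathbb{C}w)$ carries a bilateral shift), and then uses minimality of the extension to conclude $K=\bigvee_{w}L_w$. You have simply unpacked the content of Remark~\ref{r2} directly via the computation $\scal{U^kw}{w}=\scal{V^kw}{w}=0$ for $k\ge1$, which is exactly the paper's implicit step.
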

\begin{proof}
Note that every  $V$ wandering vector $w\in H$ becomes $U$ wandering. On the other hand $U$ is unitary. According to Remark \ref{r2} subspace $L_w:=\bigoplus_{n\in\mathbb{Z}}V^n\left(\mathbb{C}w\right)$ is the minimal $U$ reducing subspace generated by $w.$
 Since $H$ is linearly spanned by wandering vectors then $H\subset\bigvee_{w\in W}L_w$ where $W$ denotes set of all $V$ wandering vectors. Since $L_w\subset K,$ by minimality of unitary extension we have $K=\bigvee_{w\in W}L_w.$
 On the other hand $U|_{L_w}$ is a bilateral shift which finishes the proof.
 \end{proof}
 Eventually we obtain the following result.
\begin{thm}
Let $V_1, V_2\in L(H)$ be a pair of commuting, completely non doubly commuting isometries. The unitary extension of each
isometry is a span of bilateral shifts.
\end{thm}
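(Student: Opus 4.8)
The plan is to obtain the statement as an essentially immediate consequence of Corollary \ref{bispan} combined with the description of completely non doubly commuting pairs recorded in the preceding Proposition. Almost all of the work has been carried out in those two results, so the remaining task is to verify that the hypothesis of Corollary \ref{bispan} is satisfied separately for each of the two isometries.

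First I would invoke the preceding Proposition, which asserts that for a commuting, completely non doubly commuting pair $V_1, V_2$ the whole space $H$ is simultaneously a linear span of vectors wandering for $V_1$ and a linear span of vectors wandering for $V_2$. This is where complete non double commutativity is really used: as noted in the Introduction, commutativity with a unitary forces double commutativity, so in the four-fold decomposition $H = H_{uu}\oplus H_{us}\oplus H_{su}\oplus H_S$ each of the summands $H_{uu}, H_{us}, H_{su}$ (on which at least one of the two isometries is unitary) would be a nonzero reducing subspace carrying a doubly commuting pair; hence all three must vanish and only $H_S$ survives. Consequently, for each index $i\in\{1,2\}$ the space $H$ meets exactly the hypothesis of Corollary \ref{bispan} relative to $V_i$.

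Next I would apply Corollary \ref{bispan} to $V_1$: since $H$ is a linear span of $V_1$ wandering vectors, its minimal unitary extension is a span of bilateral shifts. Interchanging the roles of $V_1$ and $V_2$ yields the same conclusion for $V_2$, and the theorem follows.

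Because the reasoning is little more than a bookkeeping step, the only point deserving care — and thus the main, if modest, obstacle — is the precise meaning of the phrase ``the unitary extension''. I would make explicit that it refers to the minimal unitary extension, which is unique up to unitary equivalence and is precisely the object handled by Corollary \ref{bispan}; since any larger unitary extension contains a reducing copy of the minimal one, the span-of-bilateral-shifts property transfers to it as well.
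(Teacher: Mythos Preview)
Your argument is correct and matches the paper's intended proof: the theorem is stated immediately after Corollary \ref{bispan} with no further justification, so it is meant to follow at once from that corollary together with the preceding Proposition, exactly as you outline. One small caveat: your closing remark that the span-of-bilateral-shifts property transfers to \emph{any} larger unitary extension is not correct in general (the orthogonal complement of the minimal extension inside a larger one could carry an arbitrary unitary, e.g.\ the identity), so the statement should be read as referring to the minimal unitary extension only.
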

\section{Examples}
We are going to give a few examples. The first is an example of isometry $V\in L(H)$ such that $H_s\ne \{0\}$ and $H$ is not linearly spanned by wandering vectors.
\begin{ex}\label{e3}
Denote by $f, e_i$ for $i\in\mathbb{Z}_+$ the set of orthonormal vectors in some Hilbert space. Define new Hilbert space $H:=\mathbb{C}f\oplus\bigoplus_{i\in\mathbb{Z}_+}\mathbb{C}e_i$ and isometry $V\in L(H)$ by $Vf=f, Ve_i=e_{i+1}$ for $i\in\mathbb{Z}_+.$ Assume that $H$ is linearly spanned by vectors wandering for $V.$ Then there is $w$ a wandering vector, such that $P_{H_u}w\ne 0.$ Obviously $H_u=\mathbb{C}f.$ Assume for convenience that $w=f+v$ where $v\in H_s=\bigoplus_{i\in\mathbb{Z}_+}\mathbb{C}e_i.$ By the proof of Theorem \ref{decthm} since $w$ is wandering then $\scal{V^nf}{f}=-\scal{V^nv}{v}.$ By the definition of $V$ we obtain $\|f\|=-\scal{V^nv}{v}=-\scal{v}{V^{*n}v}.$ Since $v\in H_s$ then $V^{*n}v$ vanishes to $0.$ Consequently we obtain a contradiction $1=\|f\|=\lim_{n\to\infty}-\scal{v}{V^{*n}v}=0.$ Thus H can not be linearly spanned by $V$ wandering vectors.\end{ex}

The next is the example of a span of bilateral shifts which is not a bilateral shift. We would like to thanks Professor  L\'aszl\'o K\'erchy for this example.

\begin{ex}\label{e2}
Let $H=L^2(\alpha)\oplus L^2(2\alpha)\oplus L^2(\alpha)$ and $U\in L(H)$ be multiplying by $''z''.$ If we assume $\alpha$ to be such an subarc that $\alpha\cup 2\alpha=\mathbb{T}$ where $\mathbb{T}$ denotes the whole unit circle then $H_0=\{0\}.$
If the operator would be unitary equivalent to some bilateral shift then its spectral multiplicities has to be equal. However spectral multiplicity of a bilateral shift is constant, while in our example it is not.
 \end{ex}
It is clear that wandering vectors of isometry $V\in L(H)$ span the whole subspace $H_s.$ On the other hand by Remark \ref{r2} wandering vector in a subspace $H_u$ fulfills the orthogonality $V^nw\perp V^mw$ for every integer powers. The natural question is what will change if we make a definition of a wandering vector stronger in the following way. We call a wandering vector $w$ strongly wandering if it fulfills the condition $V^nw\perp V^mw$ for every $n,m\in\mathbb{Z}, n\ne m.$ Denote $H_{ws}$ the minimal subspace linearly spanned by strongly wandering vectors and $H_w$ a subspace linearly spanned by wandering vectors. Obviously $H_{ws}\subset H_w$ and both subspaces are reducing for isometry $V.$ As we know subspace $H_w$ is invariant for every isometry commuting with $V$ but $H_{ws}$ does not need to be. Consider the following lemma.
\begin{lem}\label{wand}
Let $V\in L(H)$ be isometry $H=H_u\oplus H_s$ denotes its Wold decomposition. Then the following conditions holds.
\begin{enumerate}
\item for $x$ a strongly wandering vector also $x_u:=P_{H_u}x, x_s:=P_{H_s}x$ are strongly wandering vectors,
\item $W=H_s\oplus W_u$ where $W, W_u$ denotes subspaces generated by strongly wandering vectors for $V$ and $V|_{H_u}$ respectively.
\end{enumerate}
\end{lem}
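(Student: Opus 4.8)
The plan is to prove (1) first — that both Wold projections of a strongly wandering vector are again strongly wandering — and then deduce (2) as a structural corollary. Throughout I write $x=x_u\oplus x_s$ with $x_u=P_{H_u}x$, $x_s=P_{H_s}x$, and I use the paper's convention $V^{-k}=V^{*k}$. Since $H_u$ and $H_s$ reduce $V$ and are orthogonal, every cross term vanishes and one has the splitting $\scal{V^nx}{V^mx}=\scal{V^nx_u}{V^mx_u}+\scal{V^nx_s}{V^mx_s}$ for all $n,m\in\mathbb Z$. On the unitary summand the first term depends only on the difference: writing $c_u(k):=\scal{V^kx_u}{x_u}$ (well defined for all $k\in\mathbb Z$ because $V|_{H_u}$ is unitary, with $c_u(-k)=\overline{c_u(k)}$), one gets $\scal{V^nx_u}{V^mx_u}=c_u(n-m)$. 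On the shift summand I expand $x_s=\sum_{j\ge0}V^jn_j$ with $n_j\in\ker V^*$ (the shift model of $H_s$), so the $V^jN$ are mutually orthogonal and $V^{*q}x_s=\sum_{i\ge0}V^in_{i+q}$. The whole problem then reduces to showing that the coefficients $\{n_j\}$ are pairwise orthogonal: $\scal{n_a}{n_b}=0$ whenever $a\ne b$.

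The two ingredients come from testing strong wandering of $x$ on nonnegative and on nonpositive exponents respectively. For $n>m\ge0$, putting $k=n-m$, the shift term equals $\scal{V^kx_s}{x_s}=\sum_{j}\scal{n_j}{n_{j+k}}=:s(k)$, so strong wandering of $x$ gives $c_u(k)=-s(k)$ for every $k\ge1$. For the nonpositive exponents $-p<-q\le0$ a direct computation in the shift model gives $\scal{V^{-p}x_s}{V^{-q}x_s}=\sum_{i\ge0}\scal{n_{i+p}}{n_{i+q}}=\overline{s(r)}-\sum_{l=0}^{q-1}\scal{n_{l+r}}{n_l}$ with $r=p-q\ge1$, while $\scal{V^{-p}x_u}{V^{-q}x_u}=c_u(-r)=\overline{c_u(r)}=-\overline{s(r)}$ by the previous relation. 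Adding, the $\overline{s(r)}$ terms cancel and strong wandering of $x$ forces $\sum_{l=0}^{q-1}\scal{n_{l+r}}{n_l}=0$ for all $q\ge1$, $r\ge1$. Taking $q=1$ and then successive differences in $q$ collapses this telescoping family to $\scal{n_{s+r}}{n_s}=0$ for all $s\ge0$, $r\ge1$, which is exactly the desired pairwise orthogonality. I expect this cancellation — extracting the individual orthogonalities from the partial sums supplied by the negative powers — to be the main obstacle, since the bookkeeping with the backward shift indices is where all the content sits.

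Once pairwise orthogonality of the $n_j$ is in hand, both conclusions of (1) are short checks. For $x_s$ one verifies $\scal{V^nx_s}{V^mx_s}=0$ for $n\ne m$ by splitting into $n,m\ge0$, $n,m\le0$, and mixed signs; in each case the inner product is a sum of terms $\scal{n_a}{n_b}$ with $a\ne b$, hence zero. For $x_u$ one notes that pairwise orthogonality makes $s(k)=0$ for $k\ge1$, whence $c_u(k)=-s(k)=0$ for all $k\ne0$, i.e. $\scal{V^nx_u}{V^mx_u}=0$ for $n\ne m$. I expect the nonpositive exponents to be indispensable here: the nonnegative ones only produce the aggregate relation $c_u(k)=-s(k)$, which constrains $x_u$ and $x_s$ jointly but never separates them, so without the backward-shift identities one cannot conclude that either projection is strongly wandering.

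For (2) the inclusion $H_s\oplus W_u\subseteq W$ is routine: each $V^jn$ with $n\in\ker V^*$ is strongly wandering (a single nonzero coefficient), and such vectors span $H_s$, while any strongly wandering vector of $V|_{H_u}$ is, being supported in $H_u$, strongly wandering for $V$; thus both $H_s$ and $W_u$ lie in $W$, and they are orthogonal since $W_u\subseteq H_u$. For the reverse inclusion I invoke (1): an arbitrary strongly wandering $x$ decomposes as $x=x_u\oplus x_s$ with $x_s\in H_s$ and, by (1), $x_u$ strongly wandering for $V|_{H_u}$, hence $x_u\in W_u$; therefore $x\in H_s\oplus W_u$. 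Passing to closed linear spans gives $W\subseteq H_s\oplus W_u$, and combining the two inclusions yields $W=H_s\oplus W_u$.
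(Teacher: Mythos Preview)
Your proof is correct, and part (2) matches the paper's argument essentially verbatim. For part (1), however, you take a genuinely different route. The paper observes that since $V|_{H_u}$ is unitary one has, for $n\ne m$,
\[
\scal{V^nx_u}{V^mx_u}=\scal{V^{n-k}x_u}{V^{m-k}x_u}=-\scal{V^{n-k}x_s}{V^{m-k}x_s}\xrightarrow[k\to\infty]{}0,
\]
the last step because $V^{*j}x_s\to0$ on the shift summand; thus $x_u$ is strongly wandering in one line, and $x_s$ follows from the relation $\scal{V^nx_u}{V^mx_u}=-\scal{V^nx_s}{V^mx_s}$. Your approach instead expands $x_s$ in the Wold basis and extracts from the negative-power identities the stronger intermediate fact that the coefficients $n_j\in\ker V^*$ are pairwise orthogonal, then checks both conclusions directly. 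The paper's limit argument is shorter and avoids all index bookkeeping; your argument is more hands-on but yields as a bonus the structural statement that a strongly wandering vector has orthogonal Wold coefficients, which is not stated in the paper.
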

\begin{proof}
Since $H_u, H_s$ reduce $V$ and $x$ is wandering then $\scal{V^nx_u}{V^mx_u}=-\scal{V^nx_s}{V^mx_s}$ for every $n\ne m.$ On the other hand $$\scal{V^nx_u}{V^mx_u}=\scal{V^{*k}V^nx_u}{V^{*k}V^mx_u}=\lim\limits_{k\to\infty}\scal{V^{n-k}x_u}{V^{m-k}x_u}=\dots$$
for $n\ne m$ also $n-k\ne m-k$ and consequently
$$\dots=\lim\limits_{k\to\infty}-\scal{V^{n-k}x_s}{V^{m-k}x_s}=0.$$ Thus $x_u$ is a wandering vector and by $\scal{V^nx_u}{V^mx_u}=-\scal{V^nx_s}{V^mx_s}$ also $x_s$ is wandering.

For the second part note that $V^n(\ker V^*)$ for every $n\ge 0$ is a set of $V$ wandering vectors. Thus $H_s\subset W.$ Since $H_u$ reduce $V$ every vector wandering for $V|_{H_u}$ is wandering for $V.$ Thus $W_u\subset W.$ By the first part of the lemma follows the reverse inclusion $W\subset H_s\oplus W_u.$
\end{proof}
We want to show that unitary extension can be a linear span of bilateral shifts and Hilbert space $H$ is not linearly spanned by strongly wandering vectors. Consider the following example.
\begin{ex}
 Consider Example \ref{e2} and denote $K=L^2(\alpha)\oplus L^2(2\alpha)\oplus L^2(\alpha)$ and operator of multiplying by $''z''$ by $U.$ Find a wandering subspace $W$ in a bilateral shift $ L(2\alpha)\oplus L(\alpha)$ such that $ L(2\alpha)\oplus L(\alpha)=\bigoplus_{n\in\mathbb{Z}}U^nW.$ Then take $H=L^2(\alpha)\oplus\bigoplus_{n\ge 0}U^n(W).$ Restriction $U|_H$ is isometry with unitary part equal $L^2(\alpha)$ and $U$ is its minimal unitary extension. Since $\sigma(T|_{H_u})$ do not contain unit circle, there is no subspace reducing it to a bilateral shift. Consequently $H_u$ do not contain any wandering vector and $H_{ws}=H_s=\bigoplus_{n\in\mathbb{Z}}U^nW.$ On the other hand the unitary extension is a span of bilateral shifts.
\end{ex}

\end{document}